\newcommand{\al}{\alpha}
\newcommand{\Dl}{\Delta}
\newcommand{\ve}{\varepsilon}
\newcommand{\vf}{\varphi}
\newcommand{\sg}{\sigma}
\newcommand{\bC}{\Bbb{C}}
\newcommand{\bR}{\Bbb{R}}
\newcommand{\bN}{\Bbb{N}}
\renewcommand{\r}{\rangle}
\newcommand{\sbs}{\subset}
\newcommand{\os}{\overset}
\newcommand{\us}{\underset}
\newcommand{\wt}{\widetilde}
\newcommand{\RA}{\Rightarrow}
\newcommand{\LRA}{\Leftrightarrow}
\newcommand{\frl}{\forall}
\newcommand{\mes}{\operatorname{mes}}
\newcommand{\dist}{\operatorname{dist}}
\newcommand{\esssup}{\operatornamewithlimits{ess\,sup}}
\newcommand{\sign}{\operatorname{sign}}
\newcommand{\rom}[1]{{\rm #1}}
\newtheorem{theorem}{\ \ \ Theorem}[section]
\newtheorem{lem}{\ \ \ Lemma}[section]
\theoremstyle{remark}
\newtheorem{rem}{\ \ \ Remark}
\numberwithin{equation}{section}
\begin{document}
	
\vspace{1cm} 
{\Large
	\begin{center}
		{\bf Some weighted Hardy-type 
			inequalities and applications\footnote{Published in \href{http://www.rmi.ge/proceedings/volumes/112.htm}{Proc.~A.~Razmadze Math.~Inst.~{\bf112} (1997) 113-132}. This file differs from the published version by minor corrections of English.}}
	\end{center}
}
\vspace{1cm}
\begin{center}
	{\bf Vyacheslav S. Rychkov
	}\\[2cm] 
	{
		Mathematisches Institut, Friedrich--Schiller--Universit\"{a}t Jena,\\
		Ernst--Abbe--Platz 1--4, D--07743 Jena, Germany
	}
	\vspace{1cm}
\end{center}

\vspace{4mm}
\begin{abstract}
	We study the two-weighted estimate
	\begin{equation}
		\bigg\|\sum_{k=0}^na_k(x)\int_0^xt^kf(t)dt|L_{q,v}(0,\infty)\bigg\|\leq
		c\|f|L_{p,u}(0,\infty)\|,\tag{$*$}
	\end{equation}
	where the functions $a_k(x)$ are not assumed to be positive. It is shown that
	for $1<p\leq q\leq\infty$, provided that the weight $u$ satisfies the
	certain conditions, the estimate
	$(*)$ holds if and only if the estimate
	\begin{equation}
		\sum_{k=0}^n\bigg\|a_k(x)\int_0^xt^kf(t)dt|L_{q,v}(0,\infty)\bigg\|
		\leq c\|f|L_{p,u}(0,\infty)\|.\tag{$**$}
	\end{equation}
	is fulfilled.
	The necessary and sufficient conditions for $(**)$ to be valid are
	well-known.
	The obtained result can be applied to the estimates of differential
	operators with variable coefficients in some weighted Sobolev spaces.
\end{abstract}

\vspace{.2in}
\vspace{.3in}
\hspace{0.2cm} November 1996

\newpage

\section{Introduction}

        The problem of finding the necessary and sufficient conditions imposed
on the functions $u(x)$, $v(x)$, for which the estimate of the kind
\begin{equation}
	\bigg(\int_0^{\infty}\bigg|v(x)\int_0^xA(x,t)f(t)dt\bigg|^qdx\bigg)^
	{1/q}\leq c\bigg(\int_0^{\infty}|u(x)f(x)|^pdx\bigg)^{1/p}
\end{equation}
are valid, attracted great attention for the last decade (see, e.g.,
a wide bibliography in the work of V.~D.~Stepanov \cite{1}). It is well known
that the cases $p>q$ and $p\leq q$ are quite different. Here and
everywhere below we assume that $p\leq q$.

        For $1\leq p\leq q\leq\infty$ and $A(x,t)\equiv1$ the necessary and
sufficient condition for validity of the estimate $(1.1)$ has been obtained by
J.~S.~Bradley \cite{2} and V.~M.~Kokilashvili \cite{3}. The first substantial progress
for $A(x,t)\!\not\equiv\!1$ has been reached in the papers by V.~D.~Stepanov
\cite{4}, \cite{5} and F.~J.~Mar\-tin--Reyes and E.~Sawyer \cite{6} where they investigated the
case $A(x,t)=(x-t)^{\al}$, $\al>0$.

        At present the most general classes of kernels $A(x,t)$ are
apparently considered by R.~Oinarov \cite{7}. The kernels of these classes are
positive and satisfy some additional different types restrictions of which the
most known is ``Oinarov's condition'':
$$
	c_1A(x,t)\leq A(x,y)+A(y,t)\leq c_2A(x,t),\;\;\;t<y<x,\;\;\;c_1,c_2>0.
$$

        We should like to emphasize the following fact. As far we know, until recently no work has been available in which the criteria for the validity of (1.1) with the kernels $A(x,t)$ of alternating
signs would have been considered.

        In the recent work \cite{8} the author has, however, presented some class
of kernels with alternating signs for which we managed to characterize
the admissible weights in (1.1) for $p=q=2$ provided that
the weight $u$ satisfies some extra conditions. In the present work we extend this result to all
$1<p\leq q\leq\infty$ and weaken the conditions on $u$. As an application we
studied in \cite{8} the problem of description of pointwise multipliers in some
weighted Sobolev spaces. As an application here, we solve a more general problem
of finding the criteria of bounded action for differential operators with
variable coefficients from the same weighted Sobolev spaces to the weighted
spaces~$L_q$.

        The paper is organized as follows: the main results are formulated
in \S 2. The proofs are given in \S 3 and \S 4. All the functions are assumed
to be measurable and finite almost everywhere.

\section{Results}

        Let $\bR^+=(0,\infty)$ be a half-line, $1\leq p\leq\infty$, and $u$ be
a non-negative function on $\bR$ (weight). Denote by $L_{p,u}$ a weighted space
of functions $f:\bR^+\to\bC$ with a norm
\begin{align*}
	&\|f|L_{p,u}\|=\bigg(\int_0^{\infty}|u(x)f(x)|^pdx\bigg)^{1/p},\;\;\;
	1\leq p<\infty,\\
	&\|f|L_{\infty,u}\|=\esssup_{x>0}|u(x)f(x)|.
\end{align*}
In what follows the latter modification is meant everywhere. We denote
by $p'$ a conjugate exponent: $p'=p/(p-1)$. The main result of the present
paper is the following

\begin{theorem}
Given $n\in\bN$ and functions $a_k\!:\!\bR^+\!\to\!\bC$,
$k\!=\!0,\dots,n$, let
$1<p\leq q\leq\infty$, $u$, $v$ be two non-negative functions on $\bR^+$.
Assume that there exists a constant $D(u)$ such that
\begin{gather}
	\int_0^ru^{-p'}(x)dx<\infty\;\;\;\frl r>0,\\
	\int_0^{2r}x^{(n-1)p'}u^{-p'}(x)dx\leq D(u)\int_0^rx^{(n-1)p'}u^{-p'}
	(x)dx\;\;\;\frl r>0.
\end{gather}
Then the following three assertions are equivalent:

        \rom{(i)} The inequality
        $$
	\bigg\|\sum_{k=0}^na_k(x)\int_0^xt^kf(t)dt|L_{q,v}\bigg\|\leq c
	\|f|L_{p,u}\|
$$
holds;

        \rom{(ii)} the inequalities
$$
	\bigg\|a_k(x)\int_0^xt^kf(t)dt|L_{q,v}\bigg\|\leq c_k\|f|L_{p,u}\|,
	\;\;\;k=0,\dots,n,
$$
hold;

        \rom{(iii)} the values
\begin{gather*}
	S_k=\sup_{r>0}\bigg(\int_r^{\infty}|a_k(x)v(x)|^qdx\bigg)^{1/q}\cdot
	\bigg(\int_0^rx^{-kp'}u^{-p'}(x)dx\bigg)^{1/p'}<\infty,\\
	k=0,\dots,n,
\end{gather*}
are finite.

        If $\wt{c}$ is the best constant in \rom{(i)}, then
$$
	K_1(p,q)\wt{c}\leq\sum_{k=0}^n S_k\leq K_2(D(u),n,p,q)\wt{c}.
$$
\end{theorem}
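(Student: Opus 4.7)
The implication \rom{(ii)}$\Rightarrow$\rom{(i)} is immediate from the triangle inequality in $L_{q,v}$, yielding $\tilde c\le\sum_kc_k$. For \rom{(ii)}$\Leftrightarrow$\rom{(iii)}, substitute $g(t)=t^kf(t)$ in the $k$-th inequality of \rom{(ii)}; it becomes the classical weighted Hardy inequality
$$\bigg\|a_k(x)\int_0^xg(t)\,dt|L_{q,v}\bigg\|\le c_k\|g|L_{p,\tilde u_k}\|,\qquad\tilde u_k(t)=t^{-k}u(t),$$
whose Bradley--Kokilashvili criterion is precisely the finiteness of $S_k$ with the natural exponent $t^{kp'}$ on $u^{-p'}$. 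So the nontrivial content of the theorem is \rom{(i)}$\Rightarrow$\rom{(iii)}.

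To prove \rom{(i)}$\Rightarrow$\rom{(iii)}, fix $r>0$ and $k_0\in\{0,\dots,n\}$, and probe \rom{(i)} with a test function that isolates the $k_0$-th kernel on $(r,\infty)$. By (2.1) and the linear independence of $1,t,\dots,t^n$, the Gram matrix $G_{ij}=\int_0^rt^{i+j}u^{-p'}(t)\,dt$ is positive definite (if $u^{-p'}\equiv0$ on $(0,r)$ the conclusion $S_{k_0}(r)=0$ is trivial); hence there is a unique polynomial $P$ of degree $\le n$ with
$$\int_0^rt^kP(t)u^{-p'}(t)\,dt=\delta_{k,k_0},\qquad k=0,\dots,n.$$
Setting $f_r=\overline P\,u^{-p'}\chi_{(0,r)}$, one has $\int_0^xt^kf_r(t)\,dt=\delta_{k,k_0}$ for every $x\ge r$, so the left-hand side of \rom{(i)}, restricted to $(r,\infty)$, equals $|a_{k_0}(x)|$. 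Thus
$$\|a_{k_0}|L_{q,v}(r,\infty)\|\le\tilde c\,\|f_r|L_{p,u}\|=\tilde c\bigg(\int_0^r|P|^pu^{-p'}\bigg)^{1/p}.$$

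The technical heart of the argument is the \emph{polynomial lemma}: under (2.1)--(2.2), for every polynomial $Q(t)=\sum_{k=0}^nb_kt^k$ and every $r>0$,
$$|b_{k_0}|^{p'}\int_0^rt^{k_0p'}u^{-p'}\,dt\le C\int_0^r|Q|^{p'}u^{-p'}\,dt,\qquad C=C(n,p,D(u)).$$
By LP duality for the minimization of $\|f|L_{p,u}\|$ over functions supported in $(0,r)$ with prescribed moments $\int_0^rt^kf=\delta_{k,k_0}$, this lemma is equivalent to the bound $\|f_r|L_{p,u}\|\le C^{1/p'}(\int_0^rt^{k_0p'}u^{-p'})^{-1/p'}$; inserted into the previous display it gives $S_{k_0}(r)\le C^{1/p'}\tilde c$ uniformly in $r>0$. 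Summing over $k_0$ yields $\sum_kS_k\le K_2(D(u),n,p,q)\tilde c$, while the opposite bound $K_1\tilde c\le\sum_kS_k$ comes for free from the already-proved chain \rom{(iii)}$\Rightarrow$\rom{(ii)}$\Rightarrow$\rom{(i)}. The polynomial lemma is where I expect the main difficulty: after the rescaling $t=rs$ it becomes a coefficient-extraction estimate on polynomials of degree $\le n$ on $(0,1)$, weighted by $s\mapsto u^{-p'}(rs)$, which is automatic for any fixed weight by finite-dimensionality but requires the doubling hypothesis (2.2) to deliver a constant uniform in $r$.
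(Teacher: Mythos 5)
Your architecture for \rom{(i)}$\Rightarrow$\rom{(iii)} is sound and, up to the point where you stop, it coincides with the paper's: the test function supported on $(0,r)$ with moments $\int_0^r t^k f_r\,dt=\delta_{k,k_0}$, and the reduction via duality to a coefficient-extraction estimate for polynomials in $L_{p'}(u^{-p'}dt)$ on $(0,r)$, are exactly the content of the paper's Lemma 3.1 (the Hahn--Banach duality for $\dist(e,Y)$) and Lemma 3.3. (Your exponent $t^{kp'}$ rather than the printed $x^{-kp'}$ is the correct one; compare (2.3).) The problem is that your ``polynomial lemma'' --- which is the entire technical content of the theorem --- is asserted, not proved, and the route you sketch for it cannot close the gap: after rescaling $t=rs$ the weight becomes $s\mapsto u^{-p'}(rs)$, which still depends on $r$, so finite-dimensional equivalence of norms only yields a constant $C(r)$, and ``the doubling hypothesis delivers uniformity'' is the conclusion restated, not an argument. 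The paper spends Lemmas 3.2--3.3 on precisely this point, and only for $k_0=0$: the dual extremal problem $\min_{c_1,\dots,c_n}\int_0^r|1+c_1t+\cdots+c_nt^n|^{p'}u^{-p'}dt$ is identified with an $L_{p'}$-extremal (orthogonal-type) polynomial $P_{n,r}$, whose roots are shown to be simple and to lie in $(0,r)$; the doubling hypothesis is then used, through a delicate comparison of $\int_0^{x_m}$ with $\int_{2x_m}^{x_{m+1}/2}$ in the orthogonality relations, to prove that the smallest root $x_1$ already carries a fixed fraction of the mass, $\int_0^r u^{-p'}\le K\int_0^{x_1}u^{-p'}$; since $\prod_k|1-t/x_k|\ge 2^{-n}$ on $[0,x_1/2]$, this bounds the minimum below by $c(D,n,p)\int_0^r u^{-p'}dt$, i.e.\ your lemma for $k_0=0$.

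For $k_0\ge1$ the paper does not prove your lemma at all; it inducts on $n$ at the operator level: having obtained $S_0<\infty$, it removes the $a_0$-term from \rom{(i)} via Lemma 3.4, substitutes $\wt f=tf$, $\wt u=x^{-1}u$, and invokes the inductive hypothesis. This also explains why (2.2) involves only $x^{(n-1)p'}u^{-p'}$: it supplies doubling of $x^{jp'}u^{-p'}$ for every $j\le n-1$, which is what each inductive step needs, while the final step is the plain Bradley--Kokilashvili criterion and needs no doubling. Your general-$k_0$ lemma is in fact true under (2.1)--(2.2) --- one can derive it by iterating the $k_0=0$ case with the weights $t^{jp'}u^{-p'}$ (extract $b_0$, subtract it, factor out $t$, repeat) --- but some version of the extremal-polynomial root-location argument is unavoidable, and without it your text is an outline whose only nontrivial step is missing. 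If you supply that lemma, your non-inductive presentation is a legitimate and arguably cleaner variant of the paper's proof.
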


\begin{rem}
For $p=q=2$ and under more restrictive condition on $u$ of the
form
$$
	\int_{\Dl}u^{-2}(x)dx\leq D(u)\int_{\frac{1}{2}\Dl}u^{-2}(x)dx,
$$
where $\Dl$ is any interval in $\bR^+$ ($\frac{1}{2}\Dl$ is the twice
smaller interval with the same center), such a theorem has been proved by the
author in~\cite{8}.
\end{rem}

\begin{rem}
There exist weights $u$ not satisfying $(2.2)$ for which
the assertion of Theorem $2.1$ does not hold. For example, for $u(x)=e^{-x}$
the estimate
$$
	\bigg\|x\int_0^xf(t)dt-\int_0^xtf(t)dt|L_{2,e^{-x}}\bigg\|\leq c\|f|
	L_{2,e^{-x}}\|
$$
is valid, while the corresponding estimates for either summand in
the left-hand side do not hold separately. For details we refer to the author's
work \cite{8}, Prop.~1.2 and also to A.~Kufner \cite{9}, Example 1.
\end{rem}

        As an application of Theorem 2.1 we consider estimates of
differential operators with variable coefficients in weighted Sobolev
spaces. For $l\in\bN$ we distinguish two types of such spaces:
$$
	W_{p,u}^l=\bigg\{f:\bR^+\to\bC\;\big|\;
	\|f|W_{p,u}^l\|=\sum_{k=0}^{l-1}|f^{(k)}(0)|+\|f^{(l)}|L_{p,u}\|
	<\infty\bigg\},
$$
and
\begin{align*}
	\os{\circ}{W}{}_{p,u}^l=&\bigg\{f:\bR^+\to\bC|f^{(k)}(0)=0,\;\;\;
	k=0,\dots,l-1,\|f|\os{\circ}{W}{}_{p,u}^l\|=\|f^{(l)}|L_{p,u}\|<\infty\bigg\}.
\end{align*}
The use of notation $f^{(l)}$ assumes that the function $f$ has an absolutely
continuous derivative $f^{(l-1)}$. Then $f^{(l)}=(f^{(l-1)})'$ exists almost
everywhere. In what follows, the condition (2.1) is assumed to be fulfilled. In
this case $\|f^{(l)}|L_{p,u}\|<\infty$ implies the existence of $f^{(k)}(0)$,
$k=0,\dots,l-1$, which may be understood as the limits  $f^{(k)}(0+0)$, and 
the definitions of the spaces $W_{p,u}^l$ and $\os{\circ}{W}{}_{p,u}^l$
become meaningful. Note that all polynomials of degree $\leq l-1$ belong to
$W_{p,u}^l$. The spaces $W_{p,u}^l$ have been introduced and studied by
L.~D.~Kudryavtsev \cite{10}.

        Let us consider a differential operator of the $l$-th order with
variable coefficients
$$
	P(x,D)=\sum_{m=0}^lb_m(x)D^m,
$$
which acts by the rule
$$
	P(x,D)f(x)=\sum_{m=0}^lb_m(x)f^{(m)}(x).
$$

        Introduce the notation
$$
	d_k(x)=\sum_{m=0}^{l-1-k}b_m(x)\frac{x^{l-1-k-m}}{(l-1-k-m)!},\;\;\;
	k=0,\dots,l-1.
$$

        We then have the following theorem.

\begin{theorem}
Let $l\in\bN$, $1<p\leq q\leq\infty$, and $u$,$v$ be non-negative on $\bR^+$
functions, and let the condition $(2.1)$ be fulfilled. If $l\geq	2$, then let
the condition $(2.2)$ with $n=l-1$ be also fulfilled.

        \rom{(i)} In order for
$$
	P(x,D):\os{\circ}{W}{}_{p,u}^l\to L_{q,v},
$$
it is necessary and sufficient that the conditions
\begin{gather}
	\sup_{r>0}\bigg(\int_r^{\infty}|d_k(x)v(x)|^qdx\bigg)^{1/q}\bigg(
	\int_0^rx^{kp'}u^{-p'}(x)dx\bigg)^{1/p'}<\infty,\\
	k=0,\dots,l-1,\notag
\end{gather}
and
\begin{equation}
	b_l(x)\equiv 0\;\;\;\text{for}\;\;p<q,\;\;\|b_lvu^{-1}|L_{\infty}
	(\bR^+)\|<\infty\;\;\text{for}\;\;p=q
\end{equation}
be fulfilled.
\\[-1pt]

        \rom{(ii)} In order that
$$
	P(x,D):W_{p,u}^l\to L_{q,v},
$$
it is necessary and sufficient that the conditions $(2.3)$ and $(2.4)$ be
fulfilled and also that
\begin{equation}
	\|P(x,D)x^k|L_{q,v}\|<\infty,\;\;\;k=0,\dots,l-1.
\end{equation}
\end{theorem}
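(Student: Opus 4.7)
My plan is to reduce the action of $P(x,D)$ on $\os{\circ}{W}{}_{p,u}^l$ to an instance of Theorem~2.1 plus a multiplication operator, then combine with the polynomial decomposition of $W_{p,u}^l$.

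\textbf{Taylor reduction.} For $f\in\os{\circ}{W}{}_{p,u}^l$ the vanishing initial values $f^{(k)}(0)=0$ give
$$
f^{(m)}(x)=\int_0^x\frac{(x-t)^{l-1-m}}{(l-1-m)!}\,f^{(l)}(t)\,dt,\qquad m=0,\dots,l-1,
$$
so substituting into $P(x,D)f=\sum_{m=0}^l b_m f^{(m)}$, expanding $(x-t)^{l-1-m}$ by the binomial theorem, and collecting powers of $t$ yields
$$
P(x,D)f(x)=\sum_{k=0}^{l-1}\frac{(-1)^k}{k!}\,d_k(x)\int_0^x t^k f^{(l)}(t)\,dt+b_l(x)\,f^{(l)}(x),
$$
with $d_k$ exactly as defined before the theorem. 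Writing $g:=f^{(l)}$, this realises $P(x,D)$ on $\os{\circ}{W}{}_{p,u}^l$ as $T+M_{b_l}:L_{p,u}\to L_{q,v}$, where $T$ is the integral operator of Theorem~2.1 with $n=l-1$ and $a_k=\frac{(-1)^k}{k!}d_k$, and $M_{b_l}g=b_lg$ is pointwise multiplication.

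\textbf{Part (i).} Sufficiency follows by the triangle inequality: the hypotheses (2.1) and (2.2) (for $l\ge 2$, with $n=l-1$; for $l=1$ only (2.1) is needed and $T$ is the plain Hardy operator) let me invoke Theorem~2.1, so (2.3) gives $\|Tg|L_{q,v}\|\le c\|g|L_{p,u}\|$, while (2.4) is the standard pointwise multiplier criterion for $\|b_l g|L_{q,v}\|\le c\|g|L_{p,u}\|$---it is equivalent to $b_lvu^{-1}\in L_\infty$ when $p=q$, and it forces $b_lvu^{-1}=0$ a.e.\ when $p<q$ (test against $\chi_E$ on shrinking sets and use $|E|^{1/p}=o(|E|^{1/q})$). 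For necessity I must separate $T$ from $M_{b_l}$: take $g$ supported in a small interval $(a-\ve,a+\ve)$; on $(a+\ve,\infty)$ the multiplier part vanishes while $Tg$ is seen in full, yielding $\|Tg|L_{q,v}(a+\ve,\infty)\|\le c\|g|L_{p,u}\|$, and a limiting/density argument recovers the full bound on $T$, hence (2.3) via Theorem~2.1. Subtracting $T$, the multiplier piece is controlled: inside $(a-\ve,a+\ve)$ the $Tg$ contribution is $O(\ve)$ whereas $\|b_lgv|L_q\|$ is of order $\|b_lv|L_q(a-\ve,a+\ve)\|$, from which (2.4) follows by the multiplier argument above.

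\textbf{Part (ii).} Any $f\in W_{p,u}^l$ splits uniquely as $f=\pi+h$ with the Taylor polynomial $\pi(x)=\sum_{k=0}^{l-1}\frac{f^{(k)}(0)}{k!}x^k$ and $h\in\os{\circ}{W}{}_{p,u}^l$, and $\|f|W_{p,u}^l\|\asymp\sum_{k=0}^{l-1}|f^{(k)}(0)|+\|h|\os{\circ}{W}{}_{p,u}^l\|$. By linearity of $P(x,D)$, boundedness $W_{p,u}^l\to L_{q,v}$ is equivalent to (a) boundedness from $\os{\circ}{W}{}_{p,u}^l$, handled by part~(i) via (2.3)--(2.4), together with (b) $P(x,D)x^k\in L_{q,v}$ for $k=0,\dots,l-1$, which is precisely (2.5). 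The hard step will be the separation in the necessity direction of part~(i): from the joint bound on $T+M_{b_l}$ one must distil bounds on each summand in the absence of positivity or orthogonality, and the shrinking-interval localisation is the device that makes the two pieces live in essentially disjoint regions as $\ve\to 0$.
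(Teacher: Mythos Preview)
Your Taylor reduction and the sufficiency argument are correct and coincide with the paper's. Part~(ii) is also handled exactly as in the paper. The difficulty is entirely in the necessity of part~(i), where $T$ and $M_{b_l}$ must be separated, and here there is a genuine gap.

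The ``limiting/density argument'' you invoke to pass from the off-support bound $\|Tg\,|\,L_{q,v}(a+\ve,\infty)\|\le c\|g\,|\,L_{p,u}\|$ to the full bound on $T$ is not justified and, as stated, does not work: there is no general principle recovering an operator norm from estimates of $Tg$ only outside $\operatorname{supp} g$. What \emph{is} true is that this partial information---for $g$ supported in $(0,r)$, a bound on $\|Tg\,|\,L_{q,v}(r,\infty)\|$---is precisely the input consumed by the test functions of Lemma~3.3, and the inductive proof of Theorem~2.1 goes through verbatim with this weaker hypothesis; but then you are re-running that proof, not invoking the theorem as a black box. Your further claim that ``the $Tg$ contribution is $O(\ve)$ inside $(a-\ve,a+\ve)$'' is likewise unjustified: the coefficients $d_k$ are arbitrary measurable functions and need not be locally bounded, so no smallness follows from shrinking the interval alone.

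The paper avoids both issues by reversing the order. It first proves $(2.4)$ using a test function $g\in\os{\circ}{W}{}_{p,u}^l$ supported in a short interval $\Dl_\ve$ whose $l$-th derivative has \emph{prescribed modulus} $|g^{(l)}|=u^{-p'}\chi_{\Dl_\ve\cap B}$ and, crucially, \emph{vanishing moments} $\int t^k g^{(l)}\,dt=0$ for $k=0,\dots,l-1$ (this is Lemma~4.2, built on a sign-choice lemma). The vanishing moments force each term $d_k(x)\int_0^x t^k g^{(l)}\,dt$ to be supported in $\Dl_\ve$ as well, and by first restricting to a set $B$ on which $|vd_k|\le N$ one obtains a genuinely small contribution from them as $\ve\to 0$. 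With $(2.4)$ established, the multiplication piece $b_l f^{(l)}$ is subtracted in full and Theorem~2.1 applies directly to yield~$(2.3)$.
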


\begin{rem}
This theorem generalizes Theorem $2.4$ on pointwise
multipliers in $W_{p,u}^l$ from the author's work $\cite{8}$, which considered the case $p=q=2$, and $P(x,D)f=(\vf f)^{(m)}$, $0\leq m\leq l$.
\end{rem}

        In conclusion let us formulate the following open problem: Find
for $l\geq 2$ the necessary and sufficient conditions on the function
$\vf$ under which
$$
	\|\vf f|W_{2,e^{-x}}^l\|\leq c\|f|W_{2,e^{-x}}^l\|\;\;\;\;\;\frl f\in
	W_{2,e^{-x}}^l.
$$
In other words, it is required to describe pointwise multipliers in the space
$W_{2,e^{-x}}^l$. Theorem 2.4 in \cite{8} does not answer this question. Note that
for $l=1$, $1\leq p\leq\infty$ and for arbitrary weights $u,v$ we can show that
the estimate
$$
	\|\vf f|W_{p,v}^1\|\leq c\|f|W_{p,u}^1\|\;\;\;\;\frl f\in
	W_{p,u}^1
$$
holds if and only if the function $\vf$ satisfies the conditions
\begin{gather*}
	\|\vf vu^{-1}|L_{\infty}(\bR^+)\|<\infty,\\
	\sup_{r>0}\bigg(\int_r^{\infty}|\vf'(x)v(x)|^pdx\bigg)^{1/p}
	\bigg(\int_0^ru^{-p'}(x)dx\bigg)^{1/p'}<\infty.
\end{gather*}

\section{Proof of Theorem $2.1$}

\begin{lem}
Let $X$ be a Banach space, $X^*$ be its conjugate and let $Y\sbs X$
be a closed subspace. Furthermore, let $e\in X$, $e\not\in Y$ be a fixed vector.
Then

        \rom{(i)} $\sup\{\langle y^*,e\r:y^*|_Y=0$,
$\|y^*|X^*\|=1\}=\dist(e,Y)$.

        \rom{(ii)} If there exists a vector $y_0\in Y$ such that
$\dist(e,Y)=\|e-y_0\|$, then there also exists a functional $y_0^*\in X^*$,
$y_0^*|_Y=0$, $\|y_0^*|X^*\|=1$ such that
$$
	\langle y_0^*,e\r=\|e-y_0\|.
$$
\end{lem}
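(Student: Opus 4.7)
The statement is the standard duality formula for the distance to a closed subspace together with its attainment version, so the plan is to deduce both parts from a single Hahn–Banach extension.

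For part (i), I would first establish the easy inequality $\sup\{\langle y^*,e\r\}\le\dist(e,Y)$. For any $y^*$ with $y^*|_Y=0$ and $\|y^*\|=1$, and any $y\in Y$, we have $\langle y^*,e\r=\langle y^*,e-y\r\le\|e-y\|$; taking the infimum over $y\in Y$ yields the bound. For the nontrivial inequality I would work on the one-dimensional extension $Z=Y\oplus\bC e$ (well-defined as a direct sum because $Y$ is closed and $e\notin Y$, so $\dist(e,Y)>0$). Define the linear functional $f_0$ on $Z$ by
\[
	f_0(y+\lambda e)=\lambda\cdot\dist(e,Y),\qquad y\in Y,\ \lambda\in\bC.
\]
A quick check shows $\|f_0|Z^*\|=1$: for $\lambda\ne 0$, $\|y+\lambda e\|=|\lambda|\,\|e-(-y/\lambda)\|\ge|\lambda|\dist(e,Y)=|f_0(y+\lambda e)|$, which gives $\|f_0\|\le 1$, and the reverse is obtained by choosing $y\in Y$ with $\|e-y\|$ close to $\dist(e,Y)$. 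Then the Hahn–Banach theorem provides an extension $y^*\in X^*$ with $y^*|_Y=0$, $\|y^*|X^*\|=1$, and $\langle y^*,e\r=\dist(e,Y)$, which finishes (i).

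For part (ii), the functional produced in the proof of (i) already satisfies $\langle y^*,e\r=\dist(e,Y)$ with equality. Under the additional hypothesis that $\dist(e,Y)=\|e-y_0\|$ is actually attained, the very same $y^*$ (rename it $y_0^*$) fulfills $\langle y_0^*,e\r=\|e-y_0\|$, so no further argument is needed.

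No step is genuinely hard here; the only point requiring a little care is verifying $\|f_0|Z^*\|=1$, which depends on the fact that $Y$ is closed (so $\dist(e,Y)>0$ and the definition of $f_0$ is unambiguous on the direct sum $Y\oplus\bC e$). Since the rest of the paper uses this lemma presumably as a duality tool for estimating norms against the Hardy-type operators, the explicit identification $\langle y^*,e\r=\dist(e,Y)$ — not just the supremum relation — is what will matter in the applications, and this is exactly what the Hahn–Banach extension of $f_0$ delivers.
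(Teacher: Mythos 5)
Your proof is correct and follows essentially the same route as the paper: the trivial inequality $\langle y^*,e\rangle\le\|e-y\|\le\dist(e,Y)$ followed by a norm-preserving Hahn--Banach extension of a functional annihilating $Y$, with (i) and (ii) obtained simultaneously. If anything, your explicit functional $f_0(y+\lambda e)=\lambda\,\dist(e,Y)$ on $Y\oplus\bC e$ is the more careful rendering of the paper's brief appeal to Hahn--Banach, whose literal claim (a unit-norm annihilator of $Y$ norming $e-y$ for \emph{every} $y\in Y$) can only hold when $\|e-y\|=\dist(e,Y)$.
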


\begin{rem}
The symbol $\langle y^*,e\r$ denotes the value of the functional $y^*\!\in\!X^*$ on the
vector $e\in X$. The writing $y^*|_Y=0$ means that $\langle y^*,y\r=0$
$\;\frl y\in Y$.
Item (i) of the above lemma is formulated as Exercise $19$ in Chapter $4$ of
W.~Rudin's book $\cite{11}$.
\end{rem}

\begin{proof}
Let $y\in Y$, $y^*|_Y=0$, $\|y^*|X^*\|=1$. We see that
$$
	\langle y^*,e\r=\langle y^*,e-y\r\leq\|e-y\|\leq\dist(e,Y),
$$
whence
\begin{equation}
	\sup\{\langle y^*,e\r:y^*|_Y=0,\;\;\;\|y^*\|=1\}\leq\dist(e,Y).
\end{equation}

        Next, since $e\not\in Y$, by the Hahn--Banach theorem (see W.~Rudin
\cite{11}, Th.$3.3$, for every $y\in Y$ there exists $y^*\in X^*$ such that
$\|y^*\|=1$, $y^*|_Y=0$ and the norm of the functional $y^*$ is attained on the
vector $e-y$, i.e.,
$$
	\|e-y\|=\langle y^*,e-y\r=\langle y^*,e\r.
$$
This implies that the inverse inequality in (3.1) is valid and hence item (i)
is fulfilled. Item (ii) is proved simultaneously.
\end{proof}

\begin{lem}
Let $w$ be  a positive function on $\bR^+$, such that
$$
	\int_0^rw(x)dx<\infty\;\;\;\;\;\frl r>0.
$$
Let $1\leq s<\infty$, $P_{n,r}(x)$ be a polynomial of the $n$-th degree with a
highest degree term $x^n$, satisfying the conditions
$$
	\int_0^r|P_{n,r}(x)|^{s-1}\sign P_{n,r}(x)x^kw(x)dx=0,
	\;\;\;k=1,\dots,n.
$$
Then

        \rom{(i)} all roots of the polynomial $P_{n,r}(x)$ are simple, real and
belong to the interval $(0,r)$.

        \rom{(ii)} Let, in addition, the condition
\begin{equation}
	\int_0^{2r}w(x)dx\leq c\int_0^rw(x)dx\;\;\;\;\;\frl r>0
\end{equation}
be fulfilled with  some constant $c=c(w)$. Let $x_1(r)$ be the least
root of the polynomial $P_{n,r}(x)$.
Then
\begin{equation}
	\int_0^rw(x)dx\leq K\int_0^{x_1(r)}w(x)dx\;\;\;\;\;\frl r>0,
\end{equation}
for  some constant $K=K(c,n,s)$.
\end{lem}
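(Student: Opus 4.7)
The plan is to prove both parts by applying the orthogonality hypothesis to carefully chosen test polynomials in the linear span of $x,x^2,\dots,x^n$.

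For part (i), I argue by contradiction. Suppose $P_{n,r}$ has only $m<n$ sign changes in $(0,r)$, at points $\xi_1<\dots<\xi_m$. The polynomial $T(x)=x\prod_{i=1}^m(x-\xi_i)$ has degree $m+1\le n$ and vanishes at $0$, so it lies in the linear span of $x,x^2,\dots,x^n$. Orthogonality then gives
\[
\int_0^r |P_{n,r}(x)|^{s-1}\sign P_{n,r}(x)\cdot T(x)\,w(x)\,dx=0.
\]
However, $P_{n,r}$ and $\prod_{i=1}^m(x-\xi_i)$ change sign at the same points, so their product has constant nonzero sign on $(0,r)$; multiplication by the positive factors $x$ and $w$ preserves this and forces the integral to be strictly nonzero---a contradiction. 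Thus $P_{n,r}$ has at least $n$ sign changes in $(0,r)$ and, being of degree~$n$, has exactly $n$ simple real roots there.

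For part (ii), write $P_{n,r}(x)=(x-y_i)Q_i(x)$ with $Q_i(x)=\prod_{j\ne i}(x-y_j)$, for $i=1,\dots,n$. Testing orthogonality against $T_i(x)=xQ_i(x)$ (of degree $n$, vanishing at $0$) and using the identity $|Q_i|^{s-1}\sign Q_i\cdot Q_i=|Q_i|^s$ yields the family of balance equations
\begin{equation*}
\int_0^{y_i} x(y_i-x)^{s-1}|Q_i(x)|^s w(x)\,dx=\int_{y_i}^r x(x-y_i)^{s-1}|Q_i(x)|^s w(x)\,dx.\tag{$\star_i$}
\end{equation*}
Set $W(y):=\int_0^y w$. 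The case $i=n$ is most tractable: the crude bound $|Q_n|\le y_n^{n-1}$ on $(0,y_n)$ gives $\mathrm{LHS}_n\le y_n^{ns}W(y_n)$; when $y_n<r/2$, restricting the right-hand side of $(\star_n)$ to $(2y_n,r)$---on which $(x-y_n)\ge x/2$ and $|Q_n(x)|\ge(x/2)^{n-1}$, since each $y_j\le y_n\le x/2$---gives $\mathrm{RHS}_n\ge 2y_n^{ns}(W(r)-W(2y_n))$. Combined with the doubling condition $(3.2)$, this yields $W(r)\le(c+\tfrac12)W(y_n)$; the case $y_n\ge r/2$ yields the same conclusion directly by doubling.

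To reach the required bound $W(r)\le KW(y_1)$, this estimate must be propagated from $y_n$ down to $y_1$. My plan is to iterate analogous arguments on $(\star_{n-1}),\dots,(\star_1)$, establishing at each step a bound $W(y_{i+1})\le K_0\,W(y_i)$ for a constant $K_0=K_0(c,n,s)$, so that composition gives $W(r)\le K_0^n\,W(y_1)$. The main obstacle is controlling the ratios $y_{i+1}/y_i$ by constants depending only on $(c,n,s)$: the balances $(\star_i)$ are coupled because the polynomials $Q_i$ share most of their roots, so each step of the induction requires care in the LHS/RHS estimates. An alternative route observes that $Q_n$ is the minimizer of $\int_0^r|Q(x)|^s\,xw(x)|x-y_n|^s\,dx$ over monic $Q$ of degree $n-1$, reducing the propagation to an analogue of the present lemma on the weight $xw\,|x-y_n|^s$.
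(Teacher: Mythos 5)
Your part (i) is a correct (and standard) sign-change argument, and your key identity $(\star_i)$ is exactly the identity the paper's proof is built on (its equation (3.5), obtained by testing orthogonality against $xQ_i$). Your treatment of the top root $y_n$ is also sound. But the proof is not complete: the entire content of part (ii) is the propagation $W(y_{i+1})\leq K_0 W(y_i)$ for \emph{all} $i$, and you leave this as a ``plan'' with an acknowledged obstacle. The obstacle is real for the route you sketch: for $i<n$ the polynomial $Q_i$ contains factors $(x-y_j)$ with $y_j>y_i$ that vanish inside $(y_i,r)$, so the pointwise lower bounds you used for $i=n$ (namely $|Q_n(x)|\geq (x/2)^{n-1}$ on $(2y_n,r)$) have no analogue, and your alternative route via the minimality of $Q_i$ for the modified weight $xw\,|x-y_n|^s$ founders because that weight need not satisfy the doubling condition with a constant depending only on $(c,n,s)$.

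The missing idea, which the paper supplies, is that you do not need to control the ratios $y_{i+1}/y_i$ at all. Fix $m$ and split into two cases. If $y_{m+1}\leq 4y_m$ (with $y_{n+1}:=r$), then $W(y_{m+1})\leq c^2 W(y_m)$ directly from the doubling hypothesis. If $y_{m+1}>4y_m$, compare the two sides of $(\star_m)$ after restricting the right-hand side to the nonempty interval $[2y_m,\,y_{m+1}/2]$: there one has $x\geq y_m$, $|x-y_k|\geq y_m$ for $k\leq m$, and $|x-y_k|\geq y_k/2$ for $k\geq m+1$ (since $x\leq y_{m+1}/2\leq y_k/2$), while on $[0,y_m]$ one has $x\leq y_m$, $|x-y_k|\leq y_m$ for $k\leq m$, and $|x-y_k|\leq y_k$ for $k\geq m+1$. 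The crucial point is that the far roots $y_k$, $k>m$, contribute the \emph{same} quantity $y_k$ to both bounds up to a factor of $2$, so they cancel in the comparison and leave only
$$
\int_{2y_m}^{y_{m+1}/2} w\,dx\ \leq\ 2^{s(n-m)}\int_0^{y_m} w\,dx ,
$$
whence $W(y_{m+1})\leq cW(y_{m+1}/2)\leq cW(2y_m)+c\,2^{s(n-m)}W(y_m)\leq (c^2+c\,2^{s(n-m)})W(y_m)$. Iterating over $m=1,\dots,n$ gives $(3.3)$ with $K=K_1^n$. Your argument for $i=n$ is essentially this estimate in the one case where the far roots are absent; inserting the case split and the two-sided factor-of-$2$ comparison for the far roots is what closes the gap.
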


\begin{rem}
The polynomial $P_{n,r}(x)$ does exist for every $n\in\bN$ and $r>0$, and is
the solution of the extremal problem
$$
	\int_0^r|P(x)|^sxw(x)dx\to\us{P}{\min},
$$
where $P(x)$ runs through all polynomials with the highest degree
term $x^n$. See \cite{12}, \S 2.1.1, where also
item (i) is proved (for $w\equiv 1$, but the proof works also in the general
case).
\end{rem}

\noindent{\it Proof.}
It remains to prove (ii). Let $x_1,\dots,x_n\in(0,r)$ be the roots of
$P_{n,r}(x)$ enumerated in the increasing order. Suppose also that
$x_{n+1}=r$. We shall prove that
\begin{equation}
	\int_0^{x_{m+1}}wdx\leq K_1\int_0^{x_m}wdx\;\;\;\text{for all}\;\;\;
	m=1,\dots,n.
\end{equation}
This will imply the required inequality (3.3) with the constant $K=K_1^n$.

        For $x_{m+1}\leq 4x_m$ the inequality (3.4) follows from (3.2) with
$K_1=c^2$. Let now $x_{m+1}>4x_m$. Taking the polynomial
$$
	R(x)=\prod(x-x_k),
$$
where the product is taken over all $k\in\{1,\dots,n\}\backslash\{m\}$, by the
definition of $P_{n,r}$ we have
$$
	\int_0^r|P_{n,r}|^{s-1}\sign(P_{n,r})Rxwdx=0,
$$
which can be written as
\begin{equation}
\ \hskip-1cm	\int_{x_m}^r\!\!\prod_{k=1}^{m-1}|x-x_k|^s|x-x_m|^{s-1}
		\!\prod_{k=m+1}^n\!
	|x-x_k|^sxwdx\!=\!\int_0^{x_m}\!\text{(same integrand)}
\end{equation}
(with obvious modifications for $m=1$ and $m=n$).

        By virtue of simple estimates
\begin{align*}
&\left.\begin{array}{l}
	|x-x_k|\geq x_m,\;\;\;k=1,\dots,m\\
	|x-x_k|\geq x_k/2,\;\;\;k=m+1,\dots,n\\	
	x\geq x_m
      \end{array}\right\}\;\;\;x\in[2x_m,x_{m+1/2}]\\
&\left.\begin{array}{l}
	|x-x_k|\leq x_m,\;\;\;k=1,\dots,m\\
	|x-x_k|\leq x_k,\;\;\;k=m+1,\dots,n\\	
	x\leq x_m
        \end{array}\right\}\;\;\;x\in[0,x_m],
\end{align*}
from (3.5) we have
$$	\int_{2x_m}^{x_{m+1}/2} w\,dx\leq 2^{s(n-m-1)}
		\int_0^{x_m} w\,dx.		$$
Therefore
$$	\int_0^{x_{m+1}}w\leq c\int_0^{x_{m+1}/2}w=c\int_0^{2x_m}w+c
		\int_{2x_m}^{x_{m+1}/2}w\leq(c^2+c 2^{s(n-m-1)})
			\int_0^{x_m}w.  \;\;\qed     $$

\begin{lem}
Let $n$, $a_k(x)$, $k=0,\dots,n$, $p$, $q$, $v(x)$ be as in Theorem
$2.1$. Let a positive on $\bR^+$ function $u(x)$ satisfy the condition
$(2.1)$ and
$$	\int_0^{2r}u^{-p'}(x)dx\leq D\int_0^r u^{-p'}(x)dx\;\;\;\frl r>0  $$
{\rm(}which is weaker than $(2.2))$. Assume the estimate \rom{(i)} with the constant
$c<\infty$ from Theorem $2.1$ holds. Then
\begin{gather}
\ \hskip-1cm	S_0\!=\!\sup_{r>0}\bigg(\int_r^\infty\!|a_0(x) v(x)|^q dx
			\bigg)^{1/q}\bigg(\int_0^r\! u^{-p'}(x)dx
			\bigg)^{1/p'}\leq  \notag  \\
	\leq  C(D,n,p)c\!<\!\infty.
\end{gather}
\end{lem}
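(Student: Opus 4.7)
My plan is to construct a single test function $f$, supported in $[0,r]$, whose moments $\int_0^r t^k f(t)\,dt$ vanish for $k=1,\ldots,n$. Plugging such an $f$ into the assumed estimate \rom{(i)} kills every summand except $k=0$ on the region $x>r$, leaving
$$\Bigl(\int_r^\infty |a_0 v|^q\,dx\Bigr)^{1/q}\Bigl|\int_0^r f\,dt\Bigr|\leq c\,\|f|L_{p,u}\|.$$
To recover $S_0$, I need the ratio $|\int_0^r f\,dt|/\|f|L_{p,u}\|$ to be at least a constant (depending only on $D,n,p$) times the H\"older-optimal ratio $\bigl(\int_0^r u^{-p'}\bigr)^{1/p'}$ that one would have without any moment constraints.

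The function I would use is
$$f(t)=|P_{n,r}(t)|^{p'-1}\sign(P_{n,r}(t))\,u^{-p'}(t)\,\chi_{[0,r]}(t),$$
where $P_{n,r}$ is the monic polynomial furnished by Lemma 3.2 with the choices $s=p'$ and $w=u^{-p'}$. The hypotheses of Lemma 3.2 are satisfied because the assumed doubling on $u^{-p'}$ is precisely the one imposed in Lemma 3.3, and $p'<\infty$ since $p>1$. The defining orthogonality of $P_{n,r}$ gives $\int_0^r t^k f\,dt=0$ for $k=1,\ldots,n$ for free. A short computation using $(p'-1)p=p'$ and $p-p'p=-p'$ shows $\|f|L_{p,u}\|^p=I$, where $I:=\int_0^r|P_{n,r}|^{p'}u^{-p'}$. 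Expanding $P_{n,r}(t)=t^n+\cdots+c_0$ inside the identity $\int_0^r P_{n,r}f\,dt=I$ and killing the middle moments by the same orthogonality yields $|\int_0^r f\,dt|=I/\prod_{k=1}^n x_k$, where $x_1<\cdots<x_n$ are the (real, simple, and lying in $(0,r)$ by Lemma 3.2(i)) roots of $P_{n,r}$.

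Substituting these expressions into the displayed inequality and rearranging, the problem reduces to a lower bound on $I$. On $[0,x_1/2]$ each factor $|t-x_k|=x_k-t$ of $P_{n,r}$ is at least $x_k/2$, hence $|P_{n,r}(t)|\geq(\prod_k x_k)/2^n$ there and
$$I\geq\Bigl(\frac{\prod_k x_k}{2^n}\Bigr)^{p'}\int_0^{x_1/2}u^{-p'}\,dt.$$
The product $\prod_k x_k$ cancels from both sides and I am left with
$$\Bigl(\int_r^\infty |a_0 v|^q\,dx\Bigr)^{1/q}\Bigl(\int_0^{x_1/2}u^{-p'}\,dt\Bigr)^{1/p'}\leq 2^n c.$$
One step of the assumed doubling of $u^{-p'}$ passes from $x_1/2$ to $x_1$ (losing a factor $D$), and Lemma 3.2(ii) passes from $x_1$ to $r$ (losing a factor $K=K(D,n,p')$). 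Taking the supremum over $r>0$ then yields $S_0\leq 2^n(KD)^{1/p'}c$, of the required form $C(D,n,p)c$.

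The main conceptual obstacle is recognising that Lemma 3.2 is exactly the tool needed: the parameters $s=p'$, $w=u^{-p'}$ are dictated by the requirement that $\|f|L_{p,u}\|^p$ unfold cleanly into $\int|P|^{p'}u^{-p'}$, and part (ii) of that lemma is precisely the bridge needed to pass from the lower bound at scale $x_1$ to the desired bound at the full scale $r$. Once this matching is spotted, all the remaining steps are direct.
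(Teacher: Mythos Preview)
Your proof is correct and follows essentially the same strategy as the paper: both build a test function out of the orthogonal polynomial $P_{n,r}$ of Lemma~3.2 (with $s=p'$, $w=u^{-p'}$), use its moment conditions to isolate the $k=0$ term for $x>r$, and then bound the relevant $L_{p'}$ integral from below via the root estimate and doubling, invoking Lemma~3.2(ii) to pass from $x_1$ back to $r$. The one difference is that the paper sets up the extremal problem~(3.7) and appeals to the abstract duality Lemma~3.1 to identify $\dist(e,Y)=\bigl(\int_0^r|\widetilde P|^{p'}u^{-p'}\bigr)^{1/p'}$, whereas you simply write the extremal function $f=|P_{n,r}|^{p'-1}\sign(P_{n,r})\,u^{-p'}$ down and verify by hand that $\|f|L_{p,u}\|^p=I$ and $|\int_0^r f|=I/\prod_k x_k$; this bypasses Lemma~3.1 entirely and is a genuine streamlining, but the underlying mechanism is identical.
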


\begin{proof}
Consider the extremal problem
\begin{equation}
	\begin{cases}
		\dfrac{\int\limits_0^r f(t)dt}{\big(\int\limits_0^r|u(t)f(t)|^p dt
			\big)^{1/p}}\to \max\limits_f, \\[2mm]
		\displaystyle \int_0^r t^k f(t)dt=0,\;\;\;k=1,\dots,n.
	\end{cases}
\end{equation}
By substitution $f(t)=u^{-p'}(t)g(t)$, this problem reduces to a more convenient
one:
\begin{equation}
	\begin{cases}
		\dfrac{\int\limits_0^r g(t)u^{-p'}(t)dt}{\big(\int\limits_0^r|g(t)|^p
			u^{-p'}(t)dt\big)^{1/p}}\to\max\limits_g, \\
		\displaystyle \int_0^r g(t)t^k u^{-p'}(t)dt=0,\;\;\;k=1\dots,n.
	\end{cases}	
\end{equation}

        Let $L_p(A,d\mu)$ denote the space of functions $f:A\to\bC$ with the
norm $(\int_A|f|^p d\mu)^{1/p}$. Let $X=L_{p'}([0,r],u^{-p'}(t)dt)$, $Y$ be
a finite-dimensional subspace $X$ with the basis formed by the functions
$t,t^2,\dots,t^n$. Note
that $X^*\!=\!L_p([0,r], u^{-p'}(t)dt)$, the pairing between
$f\in X$ and $g\in X^*$ being defined by the formula
$$	\langle g,f\rangle=\int_0^r g(t)f(t)u^{-p'}(t)dt.		$$
If we denote by $e$ the vector of the space $X$ representing the function
$e(x)\equiv 1$,
then the extremal problem (3.8) can be written in an abstract manner:
$$	\begin{cases}
		\dfrac{\langle g,e\rangle}{\|g|X^*\|}\to
			\max\limits_{g\in X^*},  \\
		g|_{{}_Y}=0.
	\end{cases}			$$
By Lemma 3.1 (i) we have
\begin{equation}
	\sup\bigg\{\frac{\langle g,e\rangle}{\|g|X^*\|}:g|_{{}_Y}=0\bigg\}=
			\dist(e,Y).		
\end{equation}
Moreover, since the subspace $Y$ is finite-dimensional,
$\dist(e,Y)$ is certainly
attained at some vector $y_0\in Y$. Hence by Lemma 3.1 (ii),
$\sup$ is attained in the
right-hand side of (3.9), that is there exists $g_0\in X^*$,
$g_0|_{{}_Y}=0$ such that
\begin{equation}
	\frac{\langle g_0,e\rangle}{\|g_0|X^*\|}=
		\min_{c_1,\dots,c_n}\bigg(\int_0^r
		|1+c_1t+\cdots c_nt^n|^{p'} u^{-p'}dt\bigg)^{1/p'}.
\end{equation}
	
        Assume $P(t)=1+c_1t+\cdots+c_nt^n$, $w(t)=u^{-p'}(t)$.
The extremum conditions in the right-hand side of (3.10)
have the form
$$	\int_0^r|P(t)|^{p'-1}\sign P(t)t^k w(t)dt=0, \quad k=1,\dots,n.	$$
Therefore it is clear that the extremal polynomial $\wt{P}(t)$ is a constant multiple of the
polynomial $P_{n,r}(t)$ from Lemma 3.2 (with $s=p'$).
According to that lemma (note that the condition (3.2)
is fulfilled), all the roots of $\wt{P}(t)$
are located on $(0,r)$ and the following inequality holds:
$$	\int_0^r u^{-p'}(t)dt\leq K\int_0^{x_1} u^{-p'}(t)dt	\quad
		\frl r>0,		$$
where $x_1$ is the smallest root and the constant $K$ does not depend on
$r$.
Using this fact, we derive from (3.10) the following estimate:
\begin{gather*}
	\frac{\langle g_0,e\rangle}{\|g_0|X^*\|}=
		\bigg(\int_0^r |\wt{P}(t)|^{p'}u^{-p'}(t)dt\bigg)^{1/p'}= \\
	=\bigg(\int_0^r \prod_{k=1}^n \Big|1-\frac{x}{x_k}\Big|^{p'}
		u^{-p'}(t)dt\bigg)^{1/p'}\geq  \\
	\geq \bigg(\int_0^{x_1} \Big(1-\frac{x}{x_k}\Big)^{np'}
		u^{-p'}(t)dt\bigg)^{1/p'}\geq  \\
	\geq \bigg(2^{-np'} \int_0^{x_1/2} u^{-p'}(t)dt\bigg)^{1/p'}\geq
		\frac{2^{-n}}{(DK)^{1/p'}}
		\bigg(\int_0^r u^{-p'}(t)dt\bigg)^{1/p'}.
\end{gather*}

        Getting back to the original extremal problem (3.7), we see that for
every $r>0$ there exists on $(0,r)$ a function $f_0(t)=u^{-p'}(t)g_0(t)$
such that
\begin{gather*}
	\int_0^r f_0(t)dt\geq \wt{K}(D,n,p)\bigg(\int_0^r u^{-p'}(t)dt
		\bigg)^{1/p'}\bigg(\int_0^r |u(t)f_0(t)|^p dt\bigg)^{1/p},  \\
	\int_0^r t^k f_0(t)\,dt=0, \quad k=1,\dots,n.
\end{gather*}
Extending $f_0$ to $[r,\infty)$ by zero and substituting the obtained function into
the estimate (i) of Theorem 2.1, we get (3.6) with $C=1/\wt{K}$.
\end{proof}

\begin{lem}
Let $1\leq p\leq q\leq \infty$, $u,v$ be non-negative on
$\bR^+$ functions. The estimate
\begin{equation}
	\bigg\|\int_0^x f(t)\,dt|L_{q,v}\bigg\|\leq c\|f|L_{p,u}\|
\end{equation}
holds if and only if
$$	S=\sup_{r>0}\bigg(\int_r^\infty v^q(x)\,dx\bigg)^{1/q}
		\bigg(\int_0^r u^{-p'}(x)\,dx\bigg)^{1/p'}<\infty.	$$
Moreover, if $\wt{c}$ is the best constant in $(3.11)$, then
$S\leq \wt{c}\leq (q')^{1/p'}q^{1/q}S$. If $p=1$ or $q=\infty$, then
$\wt{c}=S$.
\end{lem}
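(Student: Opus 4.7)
This is the classical weighted Hardy inequality of Bradley and Kokilashvili; my plan is to sketch the standard arguments and highlight where sharpness of the constant becomes delicate.

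\emph{Necessity $S\le\wt c$.} Test (3.11) with $f_r(t):=u^{-p'}(t)\chi_{[0,r]}(t)$. A short computation gives $\|f_r|L_{p,u}\|^p=U(r)$, where $U(r):=\int_0^ru^{-p'}$, and $\int_0^xf_r=U(r)$ whenever $x\ge r$, so the left side of (3.11) is at least $V(r)^{1/q}U(r)$ with $V(r):=\int_r^\infty v^q$. Thus $V(r)^{1/q}U(r)^{1/p'}\le\wt c$ for every $r>0$; taking the supremum gives $S\le\wt c$. For $p=1$ one truncates $u^{-1}$, and for $q=\infty$ one takes an essential supremum instead.

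\emph{Extreme cases of sufficiency.} If $q=\infty$, pointwise H\"older on $F(x):=\int_0^xf$ gives $|F(x)|\le U(x)^{1/p'}\|f|L_{p,u}\|$, so $v(x)|F(x)|\le S\|f|L_{p,u}\|$ essentially and $\wt c=S$. If $p=1$, Minkowski's integral inequality yields
$$\|Fv|L_q\| \le \int_0^\infty|f(t)|V(t)^{1/q}\,dt \le \|f|L_{1,u}\|\cdot\esssup_{t>0}u^{-1}(t)V(t)^{1/q} \le S\,\|f|L_{1,u}\|,$$
using that $u^{-1}(t)V(t)^{1/q}\le\|u^{-1}|L_\infty(0,t)\|V(t)^{1/q}\le S$ a.e.\ (the definition of $S$ with $r=t$). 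Hence $\wt c=S$ in both extreme cases.

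\emph{Main case $1<p\le q<\infty$.} Take $f\ge 0$ without loss of generality. From $F(x)^q=q\int_0^xF^{q-1}f$ and Fubini applied to the $dx$-integral,
$$\|Fv|L_q\|^q=q\int_0^\infty F(t)^{q-1}f(t)V(t)\,dt.$$
H\"older with exponents $(p,p')$ on the right peels off $\|f|L_{p,u}\|$, leaving the $L^{p'}$-integral $J:=\int_0^\infty F^{(q-1)p'}V^{p'}u^{-p'}\,dt$. Rewriting $u^{-p'}dt=dU$ and integrating by parts in $dU$, the boundary terms vanish under (2.1) and the finiteness of $S$ and $\|f|L_{p,u}\|$, one resulting volume term is non-positive (and is discarded), and the other is $p'\int_0^\infty U\,F^{(q-1)p'}V^{p'-1}v^q\,dt$. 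The pointwise estimates $V\le S^qU^{-q/p'}$ and $F\le U^{1/p'}\|f|L_{p,u}\|$ (the latter by H\"older on $\int_0^tf$) produce an exact cancellation of the powers of $U$, yielding $J\le p'\,S^{q(p'-1)}\,\|f|L_{p,u}\|^{(q-1)p'-q}\,\|Fv|L_q\|^{q}$. Substituting into the identity and absorbing the surplus power of $\|Fv|L_q\|$ to the left closes the estimate.

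The main obstacle is the \emph{sharp} constant $(q')^{1/p'}q^{1/q}$: the plan above yields $q^{p/q}(p')^{p/(qp')}$, which is strictly larger unless $p=1$ or $q=\infty$ (e.g.\ $2\sqrt 2$ versus the sharp $2$ for $p=q=2$). To achieve sharpness one inserts an auxiliary factor $U(t)^\alpha U(t)^{-\alpha}=1$ into the integrand \emph{before} the H\"older step, redistributing the two powers between the $L^p$- and $L^{p'}$-factors, and then optimizes over $\alpha$; the optimal choice makes H\"older attain equality in the limit $f\to f_r$ used for necessity, forcing the constant to be sharp.
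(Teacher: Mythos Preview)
The paper does not prove this lemma at all: immediately after the statement it writes ``This is the well-known criterion \ldots\ obtained by J.~S.~Bradley and V.~M.~Kokilashvili. The proof can be found in [13], \S1.3.1'' and moves on. So there is no argument in the paper to compare yours against; any correct proof you supply already goes beyond what the paper offers.

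Your sketch is essentially the standard one and is sound up to the non-sharp bound. Two remarks. First, the boundary term at infinity in your integration by parts does not literally vanish from the estimates you quote: with $F\le U^{1/p'}\|f\|$ and $V\le S^qU^{-q/p'}$ the product $F^{(q-1)p'}V^{p'}U$ is only bounded, not $o(1)$. One cures this by the usual truncation (replace $f$ by $f\chi_{(0,R)}$, close the inequality uniformly in $R$, then let $R\to\infty$ via monotone convergence), which you should mention. Second, as you correctly flag, your argument as written yields the constant $q^{p/q}(p')^{p/(qp')}$, not the stated $(q')^{1/p'}q^{1/q}$. The fix you outline---inserting a factor $U^{\alpha}U^{-\alpha}$ into the H\"older step and optimizing in $\alpha$---is indeed the classical device (with the concrete choice $\alpha=1/(pp')$, so that $\int_0^x u^{-p'}U^{-1/p}\,dt=p'\,U(x)^{1/p'}$), but the optimization and the subsequent Minkowski/Fubini step are not entirely mechanical; if you want a self-contained proof with the exact constant you should carry this out rather than leave it as a plan. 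Since the paper is content to cite Maz'ya's book for the full statement including the constant, doing the same would also be acceptable.
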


        This is the well-known criterion for the validity of the weighted
Hardy's inequality obtained by J.~S.~Bradley \cite{2} and V.~M.~Kokilashvili \cite{3}.
The proof can be found in \cite{13}, \S 1.3.1.
\vskip+0.2cm

\noindent {\it Proof of Theorem $2.1$.}
 Implication (ii)$\RA$(i) is obvious. Equivalence
(ii)$\LRA$(iii) follows from Lemma 3.4. To complete the proof we have to show that
(i)$\RA$(iii).

        We act by induction in $n$. For $n=0$ the assertion of the
theorem follows from Lemma 3.4. Assume that the theorem is
already proved for $n=n_0$ and that
the following estimate holds:
\begin{equation}
	\bigg\|\sum_{k=0}^{n_0+1} a_k(x)\int_0^x t^k f(t)\,dt|L_{q,v}\bigg\|
		\leq c\|f|L_{p,u}\|,
\end{equation}
where the function $u$ satisfies the condition (2.2) with $n=n_0+1$.
Lemma 3.3 implies
that $S_0<\infty$. By Lemma 3.4 this is equivalent to the fact that the inequalities
\begin{equation}
	\bigg\|a_0(x)\int_0^x f(t)\,dt|L_{q,v}\bigg\|\leq c'\|f|L_{p,u}\|
\end{equation}
are fulfilled. Inequalities (3.12) and (3.13) yield
$$	\bigg\|\sum_{k=1}^{n_0+1} a_k(x)\int_0^x t^k f(t)\,dt|L_{q,v}\bigg\|
		\leq c''\|f|L_{p,u}\|,			$$
which by substitution $\wt{f}(t)=tf(t)$ reduces to the form
$$	\bigg\|\sum_{k=0}^{n_0} a_{k+1}(x)\int_0^x t^k
		\wt{f}(t)\,dt|L_{q,v}\bigg\|
		\leq c''\|\wt{f}|L_{p,\wt{u}}\|,	$$
where $\wt{u}(x)=x^{-1}u(x)$. Since $\wt{u}$ satisfies
the condition (2.2) with $n=n_0$, by assumption of
the induction we obtain the finiteness of the remaining constants $S_k$,
$k=1,\dots,n_0+1$.
\;\;\qed

\section{Proof of Theorem 2.2}

\begin{lem}
Let $[a,b]$ be a segment in $\bR$. For any set of functions
$h_1,\dots,h_l\in L_1([a,b])$
there exists a function $\sg$ with $|\sg(x)|=1$ on $[a,b]$ such that
$$	\int_a^b h_k(x)\sg(x)\,dx\-, \quad k=1,\dots,l.		$$
\end{lem}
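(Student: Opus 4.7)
The statement is the classical Hobby--Rice theorem on sign functions with prescribed vanishing integrals, and I would prove it via the Borsuk--Ulam antipodal theorem. The strategy is to parameterize the sign functions on $[a,b]$ having at most $l$ sign changes by points of the unit sphere $S^l\subset\bR^{l+1}$, and then to extract the desired $\sg$ from a point at which an odd continuous map $F:S^l\to\bR^l$ vanishes.

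\noindent\textbf{Construction.} To each $\lambda=(\lambda_0,\dots,\lambda_l)\in S^l$ I would associate the partition of $[a,b]$ into $l+1$ consecutive closed subintervals $I_0(\lambda),\dots,I_l(\lambda)$ of lengths $|I_i(\lambda)|=(b-a)\lambda_i^2$ (some possibly degenerating to a point, since $\sum\lambda_i^2=1$), and then set
\[
\sg_\lambda(x)=\sign(\lambda_i)\quad\text{on }\operatorname{int}I_i(\lambda),
\]
resolving the finitely many endpoints by an arbitrary choice of $\pm1$. Then $|\sg_\lambda|\equiv1$ on $[a,b]$ and $\sg_{-\lambda}=-\sg_\lambda$ a.e. Define $F:S^l\to\bR^l$ by
\[
F(\lambda)_k=\int_a^b h_k(x)\sg_\lambda(x)\,dx,\quad k=1,\dots,l.
\]
Then $F$ is continuous and odd; by the Borsuk--Ulam theorem there exists $\lambda^*\in S^l$ with $F(\lambda^*)=0$, and $\sg:=\sg_{\lambda^*}$ will have all the required properties.

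\noindent\textbf{Main difficulty.} The step I expect to require the most care is the continuity of $F$. The partition endpoints $t_i(\lambda)=a+(b-a)\sum_{j<i}\lambda_j^2$ depend continuously on $\lambda$, and $\sign(\lambda_i)$ is discontinuous only where $\lambda_i=0$; but at such points the corresponding interval $I_i(\lambda)$ shrinks to a point, so the sign ambiguity affects only a set of arbitrarily small measure as $\lambda$ varies near a given $\mu\in S^l$. Thus $\sg_\lambda\to\sg_\mu$ in measure as $\lambda\to\mu$, and dominated convergence with the integrable bound $|h_k|$ gives continuity of $F$. Once this is in hand, the Borsuk--Ulam theorem supplies the zero $\lambda^*$ and completes the proof.
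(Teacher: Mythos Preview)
Your argument is correct: this is precisely Pinkus's Borsuk--Ulam proof of the Hobby--Rice theorem, and the continuity check you single out is handled properly (the potentially discontinuous sign on $I_i(\lambda)$ is harmless because $|I_i(\lambda)|\to 0$ as $\lambda_i\to 0$, so $\sg_\lambda\to\sg_\mu$ in measure and dominated convergence applies).

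As for comparison: the paper does not give its own proof of this lemma at all --- it simply cites \cite{14} (Kashin--Saakyan, \emph{Orthogonal Series}, p.~267). Your write-up therefore supplies a complete self-contained argument where the paper offers only a reference, and the route you take is the standard modern one.
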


        The proof of this lemma can be found in  \cite{14}, p. 267.

\begin{lem}[{\cite{15}}]
 Let $l\in \bN$, $w(x)$ be a non-negative on the segment $[a,b]$
function with $\int_a^b wdx<\infty$. There exists a function $g(x)$,
$x\in \bR$, such that

        \rom{(a)} $g(x)=0$ for $x\notin [a,b]$,

        \rom{(b)} $g,g',\dots,g^{(l-1)}$ are absolutely continuous on $\bR$,

        \rom{(c)} $|g^{(l)}(x)|=w(x)$, $x\in [a,b]$.
\end{lem}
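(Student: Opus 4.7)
The strategy is to use Lemma 4.1 to produce a sign function $\sigma$ with the right moment-vanishing properties, and then construct $g$ by $l$-fold integration of $\sigma w$ starting from $a$.

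More precisely, my plan is as follows. First, apply Lemma 4.1 to the $l$ functions $h_k(x) = x^{k-1}w(x) \in L_1([a,b])$, $k=1,\dots,l$, to obtain a measurable function $\sigma$ on $[a,b]$ with $|\sigma(x)| = 1$ and
\[
    \int_a^b x^k \sigma(x) w(x)\,dx = 0, \qquad k = 0, 1, \dots, l-1.
\]
Then define
\[
    g(x) = \frac{1}{(l-1)!}\int_a^x (x-t)^{l-1}\sigma(t)w(t)\,dt \quad\text{for } x \in [a,b],
\]
and extend by $g(x) = 0$ for $x \notin [a,b]$.

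Next I would verify the three properties. Property (a) holds by construction. For (b) and (c) the key computation is that, by differentiating under the integral sign (which is justified by $w \in L_1([a,b])$), for $x \in [a,b]$ and $0 \le j \le l-1$,
\[
    g^{(j)}(x) = \frac{1}{(l-1-j)!}\int_a^x (x-t)^{l-1-j}\sigma(t)w(t)\,dt,
\]
and $g^{(l)}(x) = \sigma(x)w(x)$ a.e.\ on $[a,b]$, giving $|g^{(l)}(x)| = w(x)$, which is~(c). Each $g^{(j)}$ with $j \le l-1$ is manifestly absolutely continuous on $[a,b]$ as an indefinite integral, and vanishes at the left endpoint $a$. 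For absolute continuity on all of $\bR$ after extending by zero, I need $g^{(j)}(b)=0$ for $j=0,\dots,l-1$. Expanding $(b-t)^{l-1-j}$ as a polynomial in $t$ and using the moment-vanishing relations for $\sigma w$ above, each such boundary value is a linear combination of the vanishing moments, hence equals zero. This gives (b).

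The main obstacle — and the reason for invoking Lemma 4.1 — is precisely arranging that all $l$ endpoint values $g^{(j)}(b)$ vanish simultaneously; once this is set up, the rest is a routine application of Fubini/differentiation under the integral. Note that I only need the existence result Lemma 4.1, not any explicit construction of $\sigma$.
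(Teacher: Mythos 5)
Your proposal is correct and is essentially identical to the paper's proof: the paper likewise applies Lemma 4.1 to the moments $x^k w(x)$, $k=0,\dots,l-1$, and defines $g(x)=\frac{1}{(l-1)!}\int_a^x (x-t)^{l-1}w(t)\sigma(t)\,dt$ on $[a,b]$, extended by zero. You merely spell out the verification of the endpoint conditions $g^{(j)}(b)=0$, which the paper leaves implicit.
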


\begin{proof}
By Lemma 4.1 there exists a function $\sg$ with $|\sg(x)|=1$ on $[a,b]$
such that
$$	\int_a^b x^k w(x)\sg(x)\,dx=0, \quad k=0,\dots,l-1.	$$
Then
$$	g(x)=	\begin{cases}
			\displaystyle \frac{1}{(l-1)!}\int_a^x (x-t)^{l-1}
				w(t)\sg(t)\,dt, \quad x\in [a,b],  \\
			0, \quad x\notin [a,b],
		\end{cases}		$$
is the required function.
\end{proof}
\vskip+0.2cm

\noindent {\it Proof of Theorem $2.2$.}

        {\em Step $1$.} Let $T_f(x)=\sum\limits_{k=0}^{l-1} f^{(k)}(0)
		x^k/k!$ be the degree-($l-1$) Taylor polynomial of the function $f$. Then
\begin{gather*}
	P(x,D)f(x)=P(x,D)\bigg( T_f(x)+\int_0^x
		\frac{(x-t)^{l-1}}{(l-1)!}\,f^{(l)}(t)\,dt\bigg)=  \\
	=P(x,D)T_f(x)+\sum_{m=0}^l b_m(x) D^m \int_0^x
		\frac{(x-t)^{l-1}}{(l-1)!}\,f^{(l)}(t)\,dt=  \\
	=P(x,D)T_f(x)+\sum_{m=0}^{l-1} b_m(x) \int_0^x
		\frac{(x-t)^{l-m-1}}{(l-m-1)!}\,
		f^{(l)}(t)\,dt+b_l(x)f^{(l)}(x)= \\
	=P(x,D)T_f(x)+\sum_{m=0}^{l-1} \sum_{k=0}^{l-m-1} b_m(x)\,
		\frac{x^{l-m-1-k}(-1)^k}{(l-m-1-k)!k!} \int_0^x
		t^kf^{(l)}(t)\,dt+  \\
	+ b_l(x)f^{(l)}(x).
\end{gather*}
Changing in the last expression the order of summation with respect to $m$ and
$k$, we arrive at the formula
\begin{gather}
	P(x,D)f(x)= P(x,D)T_f(x)+  \notag  \\
	+ \sum_{k=0}^{l-1} \frac{(-1)^k}{k!}\,d_k(x) \int_0^x
		t^k f^{(l)}(t)dt+b_l(x) f^{(l)}(x).
\end{gather}

        {\em Step $2$ $($Sufficiency$)$.}
 From the identity (4.1) there follows the
estimate
\begin{gather}
	\|P(x,D)f(x)|L_{q,v}\| \leq
		c\sum_{k=0}^{l-1} |f^{(k)}(0)|\cdot
		\|P(x,D)x^k|L_{q,v}\|+  \notag  \\
	+c\sum_{k=0}^{l-1} \bigg\| d_k(x)\int_0^x
		t^k f^{(l)}(t)\,dt|L_{q,v}\bigg\|+
		\|b_lf^{(l)}|L_{q,v}\|.
\end{gather}

        If conditions (2.3) are fulfilled, then by Lemma 3.4 the second term
in the right-hand side of (4.2) can be bounded by $c'\|f^{(l)}|L_{p,u}\|$.
The third term for $p=q$ can be
estimated in an obvious manner:
$$	\|b_lf^{(l)}|L_{p,v}\|\leq \|b_l vu^{-1}|L_\infty(\bR^+)\|
		\cdot \|f^{(l)}|L_{p,u}\|.		$$
All these arguments imply the sufficiency of conditions in
both parts of Theorem 2.2.

        {\em Step $3$ $($Necessity$)$.}
 The necessity of the condition (2.5) in item (ii)
is obvious. The theorem will be proved if we show that from
\begin{equation}
	P(x,D):\os{\circ}{W}{}_{p,u}^{(l)}\to L_{q,v}	
\end{equation}
there follows the fulfillment of (2.3) and (2.4).

        We start with (2.4). Consider the set
$A_\al=\{x\in \bR^+:|b_l vu^{-1}(x)|\geq \al\}$. Suppose
$\mes A_\al>0$. Let $B$ be
a bounded subset of $A$ of positive measure on which the functions
$v(x)d_k(x)$ are bounded, say
$B\sbs [0,M]$, $|v(x)d_k(x)|\leq N$, $x\in B$,
$k=0,\dots,l-1$. For every $\ve>0$ there exists a segment
$\Dl_\ve\sbs \bR^+$ of length $\ve$ such that $\mes \Dl_\ve\cap B>0$. By
Lemma 4.2, there exists an $l$ times differentiable function $g$ supported on
$\Dl_\ve$ and such that
$$	|g^{(l)}(x)|=
		\begin{cases}
			u^{-p'}(x) & \text{if}\;\;x\in \Dl_\ve\cap B,  \\
			0 & \text{otherwise}.
		\end{cases}				$$
The inequality
$$	\|P(x,D)g|L_{q,v}\|\leq K\|g^{(l)}|L_{p,u}\|		$$
and the identity (4.1) yield the estimate
\begin{equation}
	\|b_lf^{(l)}|L_{q,v}\|\leq K\|g^{(l)}|L_{p,u}\|+
		c\sum_{k=0}^{l-1}\bigg\|d_k(x) \int_0^x t^k g^{(l)}(t)\,dt
		|L_{q,v}\bigg\|.
\end{equation}

        Using H\"{o}lder's inequality and taking into account the inclusion
\linebreak $\Dl_\ve\cap B\sbs A_\al$, we can see that
\begin{gather}
	\|b_lf^{(l)}|L_{q,v}\|=\bigg(\int_{\Dl_\ve\cap B}
		|b_lu^{-p'}v|^qdx\bigg)^{1/q}\geq   \notag  \\
	\geq (\mes \Dl_\ve\cap B)^{1/q-1/p} \bigg(\int_{\Dl_\ve\cap B}
		|b_lu^{-p'}v|^pdx\bigg)^{1/p}\geq   \notag  \\
	\geq (\mes \Dl_\ve\cap B)^{1/q-1/p} \al \bigg(\int_{\Dl_\ve\cap B}
		u^{-p'}(x)\,dx\bigg)^{1/p}.
\end{gather}

        Further,
\begin{gather}
	\|g^{(l)}|L_{p,u}\|=\bigg(\int_{\Dl_\ve\cap B}
		u^{-p'}(x)\,dx\bigg)^{1/p},  \\
\ \hskip-1cm	\bigg\|d_k(x) \int_0^x\! t^k g^{(l)}(t)\,dt|L_{q,v}\bigg\|\!\leq \!
		NM^k\bigg(\int_{\Dl_\ve\cap B} u^{-p'}(t)dt\bigg)\times
		\!(\mes \Dl_\ve\cap B)^{1/q}.
\end{gather}
Substituting (4.5)--(4.7) into (4.4) we obtain
\begin{gather}
	\al\leq (\mes \Dl_\ve\cap B)^{1/p-1/q}\times  \notag  \\
	\times \bigg(K+cNM^{l-1}
		\bigg(\int_{\Dl_\ve\cap B} u^{-p'}(t)\,dt\bigg)^{1/p'}
		 (\mes \Dl_\ve\cap B)^{1/q} \bigg).
\end{gather}

        We now pass to the limit $\ve \to 0$. Then for $p<q$ Eq.~(4.8) implies that
$\al=0$, while
for $p=q$ it implies $\al\leq K$. Thus the proof of (2.4) is complete.

        Next, by virtue of the identity (4.1) and also from (4.3) and (2.4) it
follows that the estimate
$$	\bigg\|\sum_{k=0}^{l-1} \frac{(-1)^k}{k!}\,d_k(x)
		\int_0^x t^k f^{(l)}(t)\,dt|L_{q,v}\bigg\|\leq
		\|f^{(l)}|L_{p,u}\|		$$
holds. Now, Theorem 2.1 implies (2.3).
\;\;\qed

\section*{Acknowledgement}

The work is carried out under the financial support of Russian Fund of
Fundamental Investigations, Grant RFFI--96-01-00243. {\bf Added June 2021:} The author is grateful for the hospitality to the Balabanovo bootcamp facility, where this work was carried out in the summer of 1996.

\end{document}